\documentclass[12pt, reqno]{amsart}

\usepackage{amsmath}
\usepackage{amssymb}
\usepackage{fullpage}
\usepackage{amsthm}
\usepackage{graphicx}
\usepackage{placeins}
\usepackage{caption}
\usepackage{extpfeil}
\usepackage{enumerate}
\usepackage{caption}
\usepackage{subcaption}
\usepackage{hyperref}
\usepackage{xcolor}
\usepackage{pbox}

\definecolor{blue}{rgb}{0,0,0.9}
\definecolor{purple}{rgb}{0.6,0,0.9}

\newcommand{\be}{\begin{equation}}
\newcommand{\ee}{\end{equation}}
\newcommand{\bee}{\begin{equation*}}
\newcommand{\eee}{\end{equation*}}




\linespread{1.03}

\newtheorem{thm}{Theorem}
\newtheorem{prop}[thm]{Proposition}

\numberwithin{equation}{section}

\parindent 1em
\parskip 1ex

\begin{document}

\title{Symmetry of hypersurfaces with ordered mean curvature in one direction}
\author{Yanyan Li$^1$, Xukai Yan$^2$, and Yao Yao$^3$}
\date{\today}
\thanks{$^1$Department of Mathematics, Rutgers University, 110 Frelinghuysen Road, Piscataway, NJ 08854. Email: yyli@math.rutgers.edu. YYL is partially supported by 
NSF Grants DMS-1501004, DMS-2000261, and Simons Fellows Award 677077}
\thanks{$^2$Department of Mathematics, Oklahoma State University, 401 Mathematical Sciences Building, Stillwater, OK 74078 USA. E-mail: xuyan@okstate.edu. 
 XY is partially supported by AMS-Simons Travel Grant and AWM-NSF Travel Grant 1642548.}
\thanks{$^3$School of Mathematics, Georgia Institute of Technology, 686 Cherry Street, Atlanta, GA 30332-0160 USA. E-mail: yaoyao@math.gatech.edu. YY is partially supported by NSF grants DMS-1715418 and DMS-1846745, and Sloan Research Fellowship.}

\begin{abstract}

For a connected $n$-dimensional compact smooth hypersurface $M$ without boundary embedded in $\mathbb{R}^{n+1}$, a classical result of Aleksandrov  shows that it must be a sphere if it has constant mean curvature. Li and Nirenberg studied a one-directional analog of this result: if every pair of points $(x',a), (x',b)\in M$ with $a<b$ has ordered mean curvature $H(x',b)\leq H(x',a)$, then $M$ is symmetric about some hyperplane $x_{n+1}=c$ under some additional conditions. Their proof was done by the moving plane method and some variations of the Hopf Lemma. We obtain the symmetry of $M$ under some weaker assumptions using a variational argument, giving a positive answer to the conjecture in \cite{LN2}.

\end{abstract}

\maketitle

\section{Introduction}

Let $M$ be a compact connected $C^2$ hypersurface without boundary embedded in $\mathbb{R}^{n+1}$. For $x\in M$, we denote its mean curvature by
$
H(x) = \frac{1}{n}\sum_{i=1}^n k_i(x), 
$ 
where $k_1(x),\dots,k_n(x)$ are the principal curvatures of $M$ at $x$ with respect to the outer normal. 

It is a classical problem to study how the symmetry of a hypersurface $M$ in $\mathbb{R}^{n+1}$ is related to its mean curvature, see e.g. Jellett \cite{Jellett}, Liebmann \cite{Liebmann} and Chern \cite{Chern}. 
Hopf \cite{Hopf} established that  
an immersion of a topological $2$-sphere in $\mathbb{R}^3$ with constant mean curvature must be a standard sphere, 
and raised the conjecture that the conclusion holds for all immersed connected closed hypersurfaces in $\mathbb{R}^{n+1}$ with constant mean curvature. 
Aleksandrov \cite{Aleksandrov} proved that if $M$ is an embedded connected closed hypersurface with constant mean curvature, then $M$ must be a standard sphere. 
If $M$ is immersed instead of embedded, then the conclusion does not hold in general. 
In dimensions $n\ge 3$, 
Hsiang \cite{Hsiang} showed the existence of immersions of $\mathbb{S}^n$ into $\mathbb{R}^{n+1}$ with constant mean curvatures but not  standard spheres. 
For $n=2$, Wente \cite{Wente} constructed immersions of  $2$-dimensional tori into $\mathbb{R}^3$ with constant mean curvatures. 
Kapouleas \cite{Kap1, Kap2} showed the existence of  closed two surfaces of genus $g$ immersed in $\mathbb{R}^3$ with constant mean curvatures, for every $g\ge 2$.
The same problem was also studied for $\sigma_m$-curvatures for $2\le m\le n$. For every $1\le m\le n$, the $\sigma_m$-curvature is the $m$-th elementary symmetric function of the principle curvatures, i.e.  $\sigma_m(x)=\Sigma_{1\le i_1<...<i_m\le n}k_{i_1}(x)\cdots k_{i_m}(x)$. (In particular,  $\sigma_1$-curvature corresponds to the mean curvature.)  Ros \cite{Ros2, Ros} proved that for any $2\le m\le n$, if $M$ is a closed connected hypersurface embedded in $\mathbb{R}^{n+1}$ with constant $\sigma_m$-curvature, then it must be a standard sphere. 

In this paper, we study a one-directional analog related to Aleksandrov's result \cite{Aleksandrov}. Given a special direction, e.g. the \emph{vertical} direction parallel to the $x_{n+1}$ axis,  we aim to answer the following question: What assumption on the mean curvature would guarantee the symmetry of $M$ about some hyperplane $x_{n+1}=c$? Although $M$ having constant mean curvature is sufficient, this assumption is clearly too strong. It would be more reasonable to impose some one-directional assumptions, such as the mean curvature being constant along each vertical line, or an even weaker assumption that the mean curvature is ordered along each vertical line.

In \cite{Li97}, Li proved that if the mean curvature $H:M\to\mathbb{R}$ has a 
 $C^1$ 
 extension $K:\mathbb{R}^{n+1}\to\mathbb{R}$ where $K$ has a non-positive partial derivative in the $x_{n+1}$ direction, then $M$ is symmetric about some hyperplane $x_{n+1}=c$.  Li then proposed to replace the above assumption by the following weaker and more natural assumption:


\noindent\textbf{Main Assumption.} Let $x'=(x_1,...,x_n)$.  Denote by $G$ the bounded open set in $\mathbb{R}^{n+1}$ bounded by the hypersurface $M$. 
For any two points $(x', a), (x', b)\in M$ satisfying $a<b$ and that $\{(x', \theta a+(1-\theta)b) : 0\leq \theta\leq 1\}$ lies in $G$, we have \begin{equation}\label{ineq}
H(x', b)\leq H(x', a).
\end{equation}


Li and Nirenberg showed  in \cite{LN1} that this assumption alone is not enough to guarantee the symmetry of $M$ about some hyperplane $x_{n+1}=c$. 
They also constructed a counterexample  \cite[Section 6]{LN1} where the inequality \eqref{ineq} does not imply a pairwise equality, and pointed out that even if  \eqref{ineq} is replaced by an equality, it still does not guarantee the symmetry of $M$, due to the counterexample in Figure~\ref{fig0}. 

In \cite{LN2}, they conjectured that the Main Assumption together with the following Condition S should imply the symmetry.

\begin{figure}[h!]
\hspace*{-0cm}\includegraphics[scale=0.8]{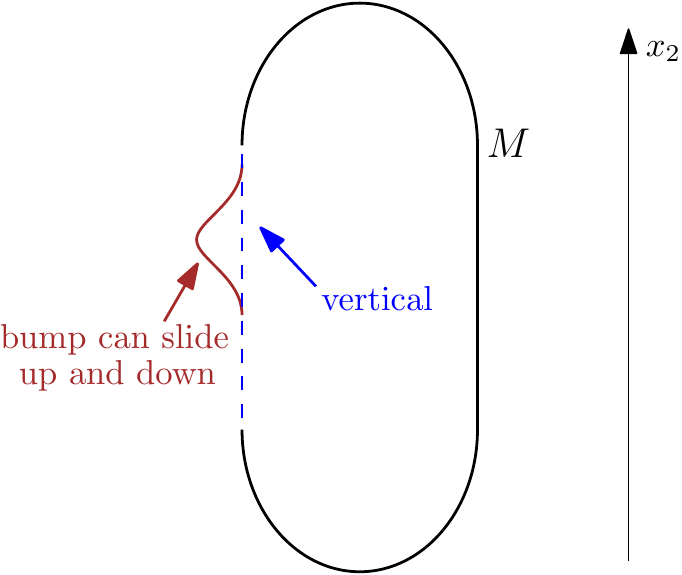}
\caption{Illustration of a smooth  curve in $\mathbb{R}^2$ that satisfies the Main Assumption with \eqref{ineq} being an equality for every pair of points, but it is not symmetric about any horizontal line. Note that it does not satisfy Condition~S or S'.\label{fig0}}
\end{figure}

\noindent\textbf{Condition S. } $M$ stays on one side of any hyperplane parallel to the $x_{n+1}$ axis that is tangent to $M$.

Note that Condition S holds for all convex $M$, but it does not require $M$ being convex.  In the case when $n=1$, when $M$ is a closed $C^2$ embedded curve in the plane satisfying both conditions above, \cite[Theorem 1.4]{LN1} proved the symmetry of $M$. In higher dimensions, Li and Nirenberg \cite[Theorem 1]{LN2} established the symmetry of $M$ under the following two assumptions, instead of Condition S: (1) Every line parallel to the $x_{n+1}$-axis that is tangent to $M$ has contact of finite order (note that every analytic $M$ satisfies this property); (2)  For every point on $M$ with a horizontal tangent, if $M$ is viewed locally as the graph of a function defined on the tangent plane, the function is locally concave near the contact point with respect to the outer normal. Note that neither of Condition S or (1)+(2) implies the other. Their proof is done by the moving plane method and some variations of the Hopf Lemma, and their result can also be extended to more general curvature functions other than the mean curvature.

In this paper, our goal is to prove the symmetry of $M$ under Condition S, which gives a positive answer to the conjecture in \cite{LN2}. In fact, we will replace Condition S by a slightly weaker Condition S':

\noindent\textbf{Condition S'. } There exists some constant $r>0$, such that for every $\bar x = (\bar x', \bar x_{n+1})\in M$ with a horizontal unit outer normal (denote it by $\bar\nu = (\bar\nu', 0)$), the vertical cylinder $|x' - (\bar x'+r\bar\nu')|=r$ has an empty intersection with $G$. ($G$ is the bounded open set in $\mathbb{R}^{n+1}$ bounded by the hypersurface $M$.)

\begin{figure}[h!]
\includegraphics[scale=0.8]{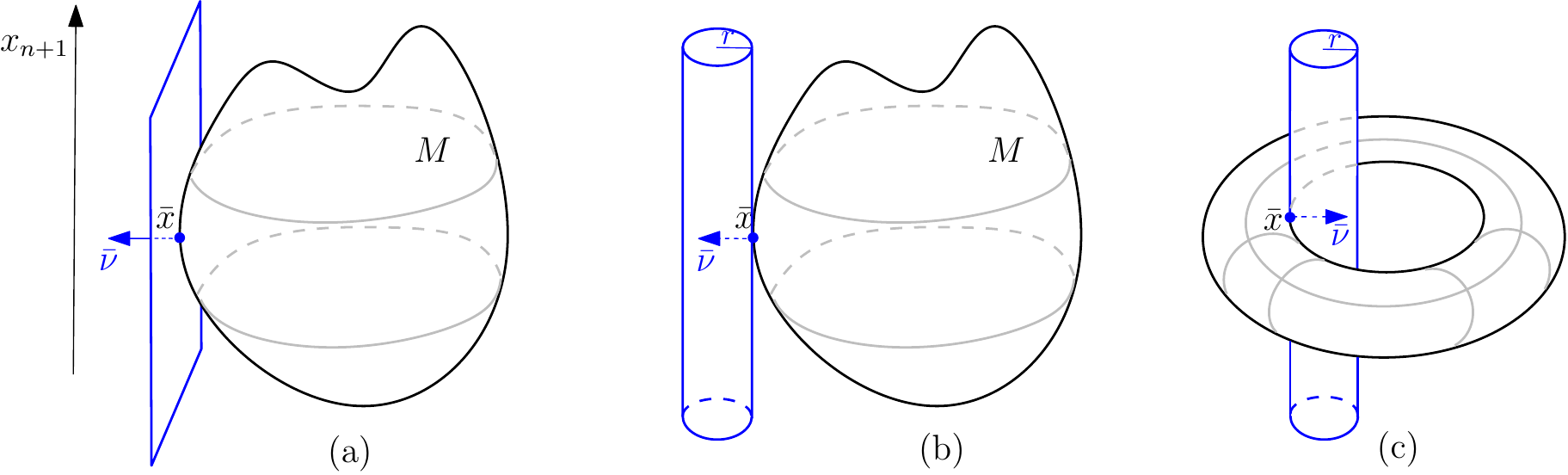}
\caption{(a) Illustration of Condition S. (b) Illustration of Condition S'. \\(c) The torus satisfies Condition~S', but not Condition S.\label{fig1}}
\end{figure}

See Figure~\ref{fig1} for an illustration of the  difference between Condition S and S'. Clearly, Condition~S' becomes more restrictive as the constant $r> 0$ increases. Note that in the $r\to+\infty$ limit, Condition S' becomes Condition S. 

The main theorem of this paper is as follows. 
\begin{thm}\label{thm1}
Let $M$ be a compact connected $C^2$ hypersurface without boundary embedded in $\mathbb{R}^{n+1}$, which satisfies both the Main Assumption and Condition S'. Then $M$ must be symmetric about some hyperplane $x_{n+1}=c$.
\end{thm}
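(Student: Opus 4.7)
The plan is a variational/moving-plane argument in the vertical direction. Let $\lambda_+ := \max_M x_{n+1}$, $\lambda_- := \min_M x_{n+1}$, let $T_\lambda := \{x_{n+1} = \lambda\}$ with reflection $R_\lambda$, and set $G^{\pm}_\lambda := G \cap \{\pm(x_{n+1}-\lambda) > 0\}$, $M^{\pm}_\lambda := M \cap \{\pm(x_{n+1}-\lambda) > 0\}$. A local $C^2$ analysis at a topmost point of $M$ shows that for $\lambda$ slightly below $\lambda_+$ the reflected upper cap $R_\lambda(\overline{G^+_\lambda})$ lies in $\overline{G^-_\lambda}$, so moving $\lambda$ downward yields a well-defined critical value
\[
\bar\lambda := \inf\bigl\{\lambda : R_\mu(\overline{G^+_\mu}) \subseteq \overline G \ \text{for every}\ \mu \in [\lambda,\lambda_+]\bigr\}.
\]
The target is to show $R_{\bar\lambda}(G^+_{\bar\lambda}) = G^-_{\bar\lambda}$, which is exactly symmetry of $M$ about $T_{\bar\lambda}$.

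To give the contact analysis a variational flavor, I would introduce the deficit $E(\lambda) := |G^-_\lambda \setminus R_\lambda(G^+_\lambda)|$ for $\lambda\ge\bar\lambda$ and study how it degenerates at $\lambda = \bar\lambda$. By minimality of $\bar\lambda$ the surfaces $R_{\bar\lambda}(M^+_{\bar\lambda})$ and $M^-_{\bar\lambda}$ must touch. At an interior contact $\bar x\in M^-_{\bar\lambda}$ with non-horizontal outer normal, the reflected image $(\bar x',2\bar\lambda-\bar x_{n+1})$ lies on $M^+_{\bar\lambda}$, and the vertical chord between them lies in $\overline G$, so the Main Assumption \eqref{ineq} gives $H(\bar x',2\bar\lambda-\bar x_{n+1}) \le H(\bar x',\bar x_{n+1})$. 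On the other hand the one-sided inclusion $R_{\bar\lambda}(G^+_{\bar\lambda}) \subseteq G^-_{\bar\lambda}$ with tangency at $\bar x$ forces the opposite ordering of mean curvatures with respect to the common outer normal. Equality then allows a Hopf-type strong maximum principle for the difference of the two surfaces (written as graphs over their common tangent plane) to propagate the coincidence along the connected component of $M$, yielding symmetry.

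The main obstacle is the case where all contact between $R_{\bar\lambda}(M^+_{\bar\lambda})$ and $M^-_{\bar\lambda}$ occurs at points where the outer normal to $M$ is horizontal, either on $M\cap T_{\bar\lambda}$ or at horizontal-tangent points of $M^-_{\bar\lambda}$. The interior Hopf argument degenerates at such points, and this is exactly where Condition S$'$ is needed. My plan is to use the exterior cylinder of radius $r$ guaranteed by S$'$ as an explicit barrier whose mean curvature is known in closed form, and to compare it with $M$ in the spirit of a Serrin corner lemma: S$'$ provides a uniform horizontal geometric margin at every such contact point, which I would convert into a strict decrease of $E$ under a small extra motion of the plane, contradicting the criticality of $\bar\lambda$ unless symmetry already holds. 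The technical weight of the proof sits in this uniform barrier-cylinder estimate, which substitutes, through a delicate variational comparison, for the finite-order-contact or local-concavity hypothesis used by Li--Nirenberg in \cite{LN2}.
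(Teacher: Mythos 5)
Your proposal follows the moving-plane/Hopf-lemma route, which is essentially the strategy of Li and Nirenberg in \cite{LN2}, not the paper's actual method. The paper abandons moving planes entirely and argues variationally: using Proposition~\ref{prop_S} it writes $M = M_1 \cup M_2 \cup \hat M$ with $M_i$ the graph of $f_i$ over $R^\circ$, first deforms $M$ by the constant field $V = e_{n+1}$ to upgrade the Main Assumption's inequality to the equality $H(x',f_1(x')) = H(x',f_2(x'))$ on $R^\circ$ (Proposition~\ref{prop_eq}), and then deforms by the shear $V = v(x')e_{n+1}$ with $v$ a cut-off of $f_1+f_2$. Computing $\tfrac{d}{dt}S(t)|_{t=0}$ both as $-\int_M V\cdot\nu\,H\,d\sigma = 0$ and directly from the graph formula, the interior term $I^1_\delta$ is bounded below (strict convexity of $q\mapsto\sqrt{1+|q|^2}$), while the boundary term $I^2_\delta$ is shown to be $O(\sqrt\delta)$ using Condition S' to force near-cancellation of $\nu_1'$ and $\nu_2'$ near $\partial R$ (Claim~2). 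No Hopf lemma, no Serrin corner analysis.

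The genuine gap in your argument is the boundary-contact case, which you explicitly flag but do not resolve. At a critical position $\bar\lambda$, if all contact between $R_{\bar\lambda}(M^+_{\bar\lambda})$ and $M^-_{\bar\lambda}$ occurs where the outer normal is horizontal (on $M\cap T_{\bar\lambda}$ or at a horizontal-tangent point below), the Hopf and Serrin lemmas degenerate, and this is precisely the obstruction Li and Nirenberg could not remove under Condition~S alone; they had to add the finite-order-contact and local-concavity hypotheses, and \cite{LN1} exhibits examples showing how badly the contact can degenerate. Your proposal to use the exterior cylinder of Condition~S' as an explicit barrier with known mean curvature and to ``convert the horizontal geometric margin into a strict decrease of $E$'' is not a step you carry out, and it is not evident that it can be carried out: the cylinder bounds the horizontal geometry of $G$ but gives no direct curvature comparison between $M^-_{\bar\lambda}$ and the reflection $R_{\bar\lambda}(M^+_{\bar\lambda})$ at a degenerate corner, which is what a Serrin-type lemma would require. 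You would also need to rule out that the ``interior'' equality propagated by the strong maximum principle stops precisely at a horizontal-normal point before covering all of $M^-_{\bar\lambda}$. As written, the hardest part of the theorem is deferred to an unproved estimate, so the proof is incomplete. It is also worth noting that the paper's use of Condition~S' is quite different from what you envision: it serves to establish the $C^{1,1}$ regularity of $\partial R$ and the quantitative cancellation $|\nu_1'-\nu_2'|\lesssim\sqrt\delta$ near $\partial R$, not as a pointwise curvature barrier at a contact corner.
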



Instead of the moving plane method, our proof has a variational flavor. More precisely, we will deform $M$ using a $C^2$ vector field $V: \mathbb{R}^{n+1}\to \mathbb{R}^{n+1}$, and consider the one-parameter family $\{M(t)\}_{t\in\mathbb{R}}$ of hypersurfaces
\begin{equation}
M(t) := \{x + t V(x): x \in M\}.
\end{equation}
Let $S(t) := \int_{M(t)} d\sigma$ be the surface area of $M(t)$. The key idea is to carefully choose some vector field $V$, then use two different ways to compute the first variation of the surface area at $t=0$ (i.e. computing $\frac{d}{dt}S(t)|_{t=0}$), and obtain a contradiction if $M$ is not symmetric about any $x_{n+1}=c$. 

In Section \ref{sec2}, we first establish some preliminary properties of the hypersurface $M$ when it satisfies the Condition S'. In particular, we will show that its projection $R$ on the hyperplane $x_{n+1}=0$ has a $C^{1,1}$ boundary, and each vertical line with $x'\in R^\circ$ intersects $M$ exactly at two points. We then prove in Section \ref{sec3} the symmetry of $M$ using a variational approach. We start with a warm-up result in Proposition~\ref{prop_eq}: as we ``deform'' $M$ using the constant vector field $V \equiv e_{n+1}$ and compute $\frac{d}{dt}S(t)|_{t=0}$ in two different ways, a short argument gives that the inequality $H(x', b) \leq H(x',a)$ for $a<b$ in the Main Assumption must actually be an equality for every pair of points. Building on this result, we finally present the proof of Theorem~\ref{thm1} using another carefully chosen vector field $V$.

\subsection*{Notations} 
For any $E\subset \mathbb{R}^{n+1}$, let $\pi(E)$ denote the projection of $E$ into the first $n$ coordinates, that is,
\[
\pi(E) :=  \{x'\in \mathbb{R}^n: (x', x_{n+1})\in E \text{ for some } x_{n+1}\}.
\] 
In particular, let $R := \pi(M)$ be the projection of $M$ on $\mathbb{R}^n$, which we will use extensively in this paper. The fact that $M$ is a compact connected closed hypersurface yields that $R\subset\mathbb{R}^n$ is bounded, closed, and connected. Throughout this paper we let $\partial R$ be the boundary of $R$ in $\mathbb{R}^n$.

In this proof we will work with balls in both $\mathbb{R}^n$ and $\mathbb{R}^{n+1}$. To avoid confusion, we denote by $B_r^{n+1}(x)$  the ball in $\mathbb{R}^{n+1}$ centered at $x$ with radius $r$, and  $B_r^n(x')$ the ball in $\mathbb{R}^{n}$ centered at $x'$ with radius $r$. 

For a  set $E \subset \mathbb{R}^d$ (where we will take either $d=n$ or $d=n+1$ in the proof), we say that its boundary $\partial E$ satisfies the \emph{interior ball condition} with radius $\rho$ if for every $x\in\partial E$, there is an open ball $B_x\subset E$ with radius $\rho$ such that $x\in \partial B_x$. Likewise, we say $\partial E$ satisfies the \emph{exterior ball condition} with radius $\rho$ if for every $x\in\partial E$, there is an open ball $B_x\subset E^c$ with radius $\rho$ such that $x\in \partial B_x$. Note that since $M = \partial G$ is a $C^2$ hypersurface embedded in $\mathbb{R}^{n+1}$, it satisfies both the interior and exterior ball condition with radius $\rho$ for some $\rho>0$.

\section{Preliminary properties of the hypersurface}\label{sec2}
 In the next proposition, we will establish some preliminary properties of the hypersurface $M$ when it satisfies the Condition S'.

\begin{prop}\label{prop_S}
Let $M$ be a compact connected $C^2$ hypersurface without boundary embedded in $\mathbb{R}^{n+1}$, which satisfies Condition S'. Then we have the following:
\begin{enumerate}[(a)]

\item For every $\bar x=(\bar x',\bar x_{n+1})\in M$, it has a horizontal outer normal if and only if $\bar x'\in \partial R$.

\item $\partial R$ satisfies both the interior and exterior ball condition with radius $\rho_0$ for some $\rho_0 \in (0,r]$ (here $r>0$ is the constant in Condition S'), and has $C^{1,1}$ regularity.

\item $M = M_1 \cup M_2 \cup \hat M$, where
\begin{equation}\label{mhat}
\hat M := \{(x', x_{n+1})\in M : x'\in\partial R\},
\end{equation}
and
 $M_1, M_2$ are graphs of two functions $f_1, f_2: R^\circ \to\mathbb{R}$, with $f_1, f_2 \in C^2(R^\circ)$, and $f_1>f_2$ in $R^\circ$. \end{enumerate}
\end{prop}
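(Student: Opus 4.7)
Part (a) can be proved by establishing both directions. The direction ``$\bar x'\in\partial R$ implies horizontal outer normal at $\bar x$'' is the contrapositive of a standard implicit function theorem argument: if the outer normal at $\bar x\in M$ has nonzero $x_{n+1}$-component, then $M$ is locally the graph of a $C^2$ function over an open neighborhood of $\bar x'$ in $\mathbb{R}^n$, forcing $\bar x'\in R^\circ$. For the converse, suppose $\bar x\in M$ has horizontal outer normal $(\bar\nu',0)$, and let $\Sigma=\{x\in\mathbb{R}^{n+1}:|x'-(\bar x'+r\bar\nu')|=r\}$ be the cylinder supplied by Condition S', with open interior $\Sigma^-$ and open exterior $\Sigma^+$. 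Since $\Sigma$ is disjoint from $G$ and $G$ is connected, $G$ lies entirely in $\Sigma^-$ or $\Sigma^+$; a first-order expansion at $\bar x$ places $G$ locally in $\Sigma^+$, hence globally $G\subset\Sigma^+$. Any point of $M=\partial G$ lying in the open set $\Sigma^-$ would also force $G$ to meet $\Sigma^-$, so $\bar G\cap\Sigma^-=\emptyset$. Projecting, $R=\pi(\bar G)$ is disjoint from $B_r^n(\bar x'+r\bar\nu')$; since this ball has $\bar x'\in R$ on its boundary, $\bar x'\in\partial R$.

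Part (b) follows quickly. At each $\bar x'\in\partial R$, Condition S' and part (a) provide the exterior ball $B_r^n(\bar x'+r\bar\nu')\subset\mathbb{R}^n\setminus R$ tangent at $\bar x'$. For the interior ball, the $C^2$ hypersurface $M$ satisfies the interior ball condition with some radius $\rho>0$ in $G$; at a horizontal-normal point $\bar x\in\hat M$, this $(n+1)$-ball $B_\rho^{n+1}(\bar x-\rho\bar\nu)$ lies in the horizontal slice $\{x_{n+1}=\bar x_{n+1}\}$, so its projection $B_\rho^n(\bar x'-\rho\bar\nu')\subset\pi(G)\subset R$ is an $n$-ball tangent at $\bar x'$. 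Setting $\rho_0:=\min(r,\rho)$ yields both interior and exterior ball conditions on $\partial R$, and the standard result that such a set has $C^{1,1}$ boundary completes this part.

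For part (c), I define $f_1(x'):=\max\{t:(x',t)\in M\}$ and $f_2(x'):=\min\{t:(x',t)\in M\}$ on $R$, both continuous by compactness. On $R^\circ$, part (a) ensures $(x',f_i(x'))\notin\hat M$, so the IFT yields $f_1,f_2\in C^2(R^\circ)$. The inequality $f_1>f_2$ on $R^\circ$ holds since $f_1(x_0')=f_2(x_0')$ would mean the vertical line over $x_0'$ meets $M$ at a single tangential point, placing $x_0'\in\partial R$.

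The main technical obstacle is the decomposition $M=M_1\cup M_2\cup\hat M$, i.e.\ excluding intermediate intersection points over $R^\circ$. My plan is to split $M\setminus\hat M=M_+\sqcup M_-$ by the sign of $\nu\cdot e_{n+1}$ (each relatively clopen since this sign is continuous and nonzero off $\hat M$), check that $\pi|_{M_\pm}:M_\pm\to R^\circ$ is a proper local diffeomorphism hence a finite covering map, and observe that $M_1$ is a continuous section of $\pi|_{M_+}$ (and $M_2$ a section of $\pi|_{M_-}$). A connected covering space admitting a continuous section must be the trivial one-sheeted cover, so the task reduces to showing $M_+$ and $M_-$ are each connected. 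I expect this connectedness to be the technical crux; I would tackle it by arguing that an additional component of $M_+$ would produce an intermediate graph $g<f_1$ on $R^\circ$ whose limit on $\partial R$ yields a second horizontal-normal tangent point of $M$ on some vertical line, and then derive a contradiction from the exterior cylinder at that tangent point (furnished by Condition S') being forced to intersect $G$ through the adjacent piece of $M$.
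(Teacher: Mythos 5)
Your parts (a) and (b) follow the paper's proof essentially verbatim (the cylinder from Condition S' gives the exterior ball, the interior ball of the $C^2$ hypersurface $M$ at a horizontal-normal point projects to an interior ball of $R$, and one cites the standard ``interior $+$ exterior ball $\Rightarrow C^{1,1}$'' fact). One small slip: the ball $B_\rho^{n+1}(\bar x-\rho\bar\nu)$ does not ``lie in the horizontal slice''; only its center does, but its projection is still $B^n_\rho(\bar x'-\rho\bar\nu')$, so the conclusion stands. In part (c), your definitions of $f_1,f_2$ and the continuity/IFT arguments also match the paper.

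The genuine gap is exactly at the step you flag as the crux: the exclusion of intermediate sheets. Your covering-space reduction is fine as far as it goes ($\pi|_{M_\pm}$ is a proper local diffeomorphism onto the connected set $R^\circ$, and a section trivializes a \emph{connected} cover), but the connectedness of $M_+$ is precisely the content of the claim, and the strategy you sketch for it does not work. Having a ``second horizontal-normal tangent point of $M$ on some vertical line'' over $\partial R$ is not contradictory: Condition S' permits many points of $\hat M$ over a single $x_0'\in\partial R$ (e.g.\ a surface with a vertical cylindrical collar, where an entire vertical segment lies in $\hat M$), and the cylinder furnished by Condition S' at such a point only re-proves the exterior ball condition for $R$ — it gives no information about how many sheets of $M$ lie over nearby interior points, and there is no evident way it is ``forced to intersect $G$.'' The ingredient you never invoke, and which the paper uses, is the connectedness of $G$: with $f_1$ the top sheet and $f_2$ the next sheet down, the open slab $\Omega=\{(x',t):x'\in R^\circ,\ f_2(x')<t<f_1(x')\}$ is contained in $G$ (a vertical segment starting just below $M_1$ is in $G$ and cannot exit without crossing $M$), and $\Omega$ is relatively closed in $G$ because $\partial\Omega\subset M_1\cup M_2\cup\pi^{-1}(\partial R)$, all of which are disjoint from $G$ (recall $\pi(G)=R^\circ$). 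Hence $\Omega=G$, so $M=\partial G$ contains nothing below $M_2$ over $R^\circ$. Were there $k\ge 2$ sheet-pairs, $G$ would decompose into $k$ mutually separated slabs, contradicting connectedness. You need to replace your sketched boundary-limit argument with this (or an equivalent) use of the connectedness of $G$.
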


\noindent\textbf{Remark.} Note that one can construct examples where $M$ satisfies the assumptions of the proposition but $f_1, f_2$ are discontinuous in $R$ up to the boundary, therefore we can only conclude that $f_1, f_2 \in C^2(R^\circ)$ in (c). 
$\hat{M}$ is measurable, since $\hat{M}=M\setminus(M_1\cup M_2)$, and both $M_1$ and $M_2$ are measurable. 

\begin{proof}
The proof of (a) is rather straightforward. For any $\bar x=(\bar x',\bar x_{n+1})\in M$ with $\bar x' \in \partial R$,  the outer normal at $\bar x$ must be horizontal: if not, using the fact that $M$ is a $C^2$ hypersurface without boundary, we would have $\bar x' \in R^\circ$. The ``only if'' direction is a consequence of Condition~S'. Take any $\bar x\in M$ with a horizontal outer normal $(\bar \nu',0)$. Let  $U := \{(x',x_{n+1}): |x' - (\bar x'+r\bar\nu')| < r\}$ be the interior of the vertical cylinder given by Condition S', and note that $\bar x\in \partial U$. Condition~S' gives that $\partial U \cap G = \emptyset$, implying that $U \cap M = \emptyset$. This is equivalent to $\pi(U) \cap R = \emptyset$. Note that $\pi(U)$ is an open ball in $\mathbb{R}^n$ with $x'$ on the boundary, which implies $x' \in \partial R$.

Next we prove that $\partial R$ satisfies the exterior ball condition with radius $r$. For any $\bar x' \in \partial R$, using that $R$ is closed, there exists some $\bar x := (\bar x', \bar x_{n+1})\in M$. Denote the unit outer normal of $M$ at $\bar x$ by $\bar \nu := (\bar \nu', \bar \nu_{n+1})$. (See Figure~\ref{fig2} for an illustration.) Part (a) gives that $\bar \nu_{n+1}=0$. Let $U$ be given as in the paragraph above, and the same argument gives that $\pi(U) \subset R^c$, where $\pi(U)$ is an open ball in $\mathbb{R}^n$ with radius $r$, with $\bar x'$ on the boundary. Thus $R$ satisfies the exterior ball condition with radius $r$. 

To show the interior ball condition, take any $\bar x' \in \partial R$, and let $\bar x \in M$ be given as above. Since $M$ is a $C^2$ surface embedded in $\mathbb{R}^n$, there is some $\rho>0$ only depending on $M$, such that there exists an open ball $B_{\bar x} \subset G$ with radius $\rho$, which satisfies $\bar x \in \partial B_{\bar x}$. The ball must be tangent to $M$ at $\bar x$, thus can be written as $B^{n+1}_\rho(\bar x - \rho\bar\nu)$. Since $\bar\nu_{n+1}=0$ by (a), taking the projection $\pi$ yields that $B^n_\rho(\bar x'-\rho\bar\nu') \subset \pi(G)\subset R^\circ$, thus $\partial R$ satisfies the interior ball condition with radius $\rho>0$. Finally, setting $\rho_0 := \min\{r,\rho\}>0$ gives that $\partial R$ satisfies both the interior and exterior ball conditions with radius $\rho_0$, and it is well-known that this implies $\partial R \in C^{1,1}$ (see \cite[Lemma 2.2]{aikawa2007boundary} for a proof). 

\begin{figure}[h!]
\includegraphics[scale=0.8]{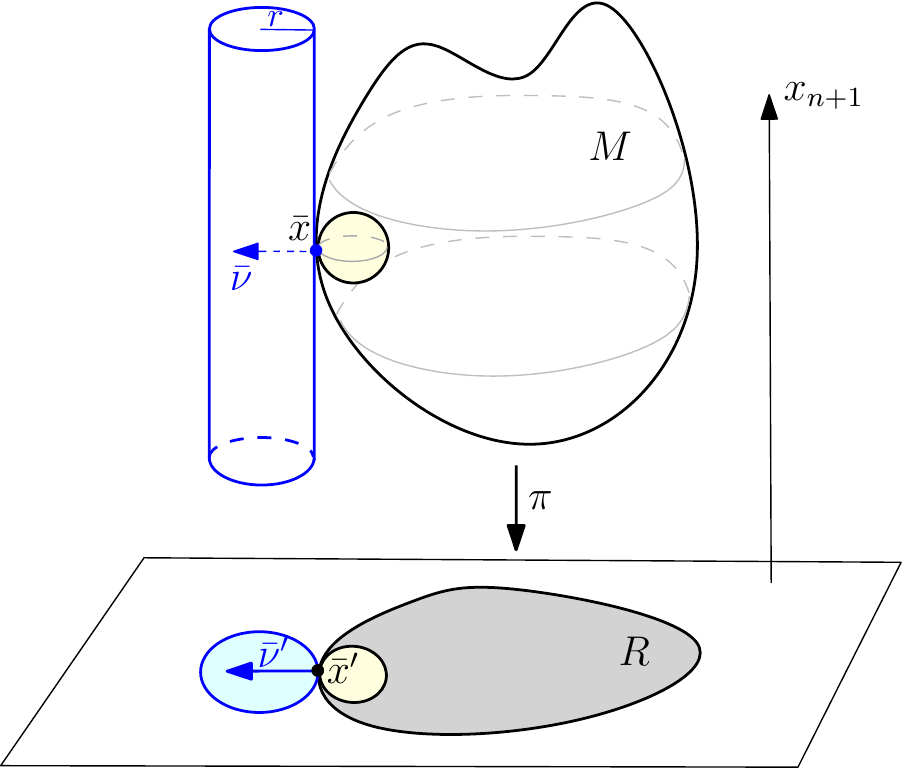}
\caption{Illustration of part (b) of the proof of Proposition~\ref{prop_S}.\label{fig2}}
\end{figure}

Next we move on to (c). Let us define $f_1:R^\circ \to \mathbb{R}$ as
\[
f_1(x') := \sup\{x_{n+1}: (x', x_{n+1}) \in M\} \quad\text{ for }x'\in R^\circ.
\]
Since $M$ is closed, and $\pi(M)=R$, we know that $f_1$ is well-defined for all $x'\in R^\circ$, and $(x', f_1(x'))\in M$ for any $x'\in R^\circ$.  Next we will show that $f_1 \in C(R^\circ)$. 

To show $f_1$ is upper semi-continuous at any $x_0' \in R^\circ$,  for any sequence of points $\{x_i'\}_{i=1}^\infty \subset R^\circ$ that converges to $x_0'$, we have $(x_i', f_1(x_i'))\in M$. This sequence has an accumulation point $(x_0', \limsup_{i\to\infty} f_1(x_i'))$, which is in $M$ since $M$ is closed. Thus by definition of $f_1$ we have $f_1(x_0') \geq \limsup_{i\to\infty} f_1(x_i')$. For the lower semi-continuity at $x_0' \in R^\circ$, by part (a), the outer normal at $(x_0', f_1(x_0'))\in M$ is not horizontal. Thus in a neighborhood of $x_0'$, $M$ can be locally parametrized as the graph of $(x', g(x'))$ for some $C^2$ function $g$, where $g(x_0')=f_1(x_0')$. The definition of $f_1$ yields that $f_1(x')\geq g(x')$ in this neighborhood, thus $f_1(x_0')=\lim_{x'\to x_0'} g(x') \leq \liminf_{x' \to x_0'} f_1(x')$. This finishes the proof that $f_1 \in C(R^\circ)$.

Note that 
\[M_1 := \{(x', f_1(x')): x'\in R^\circ\}\] is a subset of $M$, thus we have $f_1\in C^2(R^\circ)$ due to $M$ being $C^2$ and the fact that $M$ does not have horizontal outer normal in $\pi^{-1}(R^\circ)$. In addition, since $\pi(G)=R^\circ$ and $G$ is connected (which follows from that $M$ is connected), we have that $R^\circ$ is connected, and combining this with the continuity of $f_1$ yields that $M_1$ is connected. Let $M_{in} := M \cap \pi^{-1}(R^\circ)$. Note that $M_1$ is in fact a connected component of $M_{in}$ in view of (a). 

Now let us consider the set $M_{in} \setminus M_1$. Since $\pi(G)=R^\circ$, each vertical line with $x' \in R^\circ$ must intersect $M_{in}$ at least twice, implying that $\pi(M_{in}\setminus M_1)$ still covers the whole $R^\circ$. This allows us to define $f_2: R^\circ \to \mathbb{R}$ as
\[
f_2(x') := \sup
\{x_{n+1}: (x', x_{n+1}) \in M_{in} \setminus M_1\} \quad\text{ for }x'\in R^\circ.
\]
The same argument as $f_1$ also yields that $f_2 \in C^2(R^\circ)$, and $M_2 := \{(x', f_2(x')): x'\in R^\circ\}$ is another connected component of $M_{in}$. It is clear that $f_1 > f_2$ in $R^\circ$. Since $M_1, M_2 \subset M = \partial G$, and $M_2$ lies below $M_1$, 
we know that $G$ must be  between $M_1$ and $M_2$ (recall that $G$ is connected). Thus $M_{in} \subset \partial G$ cannot have any connected component 
below $M_2$. 
As a result, we have $M_{in} = M_1 \cup M_2$, i.e. $M = M_1 \cup M_2 \cup \hat M$, with $\hat M$ given by \eqref{mhat}.
\end{proof}

\section{Symmetry by a variational approach} \label{sec3}

Under Condition S', we have shown in Proposition~\ref{prop_S}(c) that $M$ can be partitioned into $M_1\cup M_2 \cup \hat M$, where $M_1, M_2$ are graphs of two functions $f_1, f_2 \in C^2(R^\circ)$, and $f_1>f_2$ in $R^\circ$. Due to the Main Assumption, we have the inequality 
\begin{equation}\label{ineq_assumption}
H(x', f_1(x')) \leq H(x', f_2(x'))\quad\text{ for all }x' \in R^\circ.
\end{equation} 
As a warm-up, let us first explain how to use a variational approach to prove a weaker result: namely, we will show that the inequality in \eqref{ineq_assumption} must actually be an equality for all $x'\in R^\circ$.

\begin{prop}\label{prop_eq}
Under the assumptions of Theorem~\ref{thm1}, for each $x' \in R^\circ$, the mean curvature of the two intersections must be identical, i.e. 
\[
H(x', f_1(x')) = H(x', f_2(x')) \quad\text{ for all } x' \in R^\circ,
\]
where $f_1, f_2$ are as given in Proposition~\ref{prop_S}(c).
\end{prop}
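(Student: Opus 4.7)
The paper already signposts the approach: take the constant vector field $V \equiv e_{n+1}$, consider the translated family $M(t) = M + t e_{n+1}$, and compute $S'(0)$ in two ways. Since a rigid translation preserves area, $S(t) \equiv S(0)$, so $S'(0)=0$. On the other hand, the classical first variation formula for a closed $C^2$ hypersurface (with mean curvature $H$ defined as in the paper, with respect to the outer unit normal $\nu$) reads
\[
S'(0) \;=\; n\int_M H\,\langle V,\nu\rangle\, d\sigma \;=\; n\int_M H\,\nu_{n+1}\, d\sigma,
\]
where $\nu_{n+1}$ denotes the $(n+1)$-th component of $\nu$. Equating the two computations produces the key identity $\int_M H\,\nu_{n+1}\, d\sigma = 0$.

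Next I would split this integral according to the decomposition $M = M_1 \cup M_2 \cup \hat M$ from Proposition~\ref{prop_S}(c). On $\hat M$, Proposition~\ref{prop_S}(a) tells us that $\nu$ is horizontal, so $\nu_{n+1}\equiv 0$ and this piece contributes nothing, regardless of the size of $\hat M$. Parametrizing $M_1$ as the graph of $f_1$ (with upward outward normal) and $M_2$ as the graph of $f_2$ (with downward outward normal), the elementary relation $\nu_{n+1}\, d\sigma = \pm\, dx'$ reduces the identity to
\[
\int_{R^\circ}\bigl(H(x',f_1(x')) - H(x',f_2(x'))\bigr)\, dx' \;=\; 0.
\]

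To close the argument, I would apply the Main Assumption for each $x' \in R^\circ$: by Proposition~\ref{prop_S}(c), $(x',f_1(x'))$ and $(x',f_2(x'))$ are the only points of $M$ over $x'$, so the open vertical segment between them lies entirely in $G$, and hence \eqref{ineq_assumption} holds, giving $H(x',f_1(x'))\le H(x',f_2(x'))$. The integrand in the previous display is therefore a continuous nonpositive function on $R^\circ$ with vanishing integral, so it must be identically zero, yielding the desired pointwise equality.

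The step I expect to require the most care is confirming that the first variation formula can be applied cleanly despite the possibility (warned about in the remark after Proposition~\ref{prop_S}) that $\hat M$ has positive $n$-dimensional Hausdorff measure. At first glance, the measurability of $\hat M$ and the behavior of $M_1, M_2$ near $\partial R$ look like they could cause trouble, but because $\nu_{n+1}$ vanishes identically on $\hat M$, that concern dissolves and only the interior pieces matter; the remainder of the argument is a short integration-by-parts identity combined with a pointwise comparison against the Main Assumption.
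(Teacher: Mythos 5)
Your proposal is correct and follows essentially the same route as the paper: translation invariance gives $S'(0)=0$, the first variation formula reduces this to $\int_{R^\circ}\bigl(H(x',f_1)-H(x',f_2)\bigr)\,dx'=0$ after the $\hat M$ contribution is killed by $\nu_{n+1}\equiv 0$, and continuity plus the sign from the Main Assumption forces pointwise equality. (Your first variation formula differs from the paper's by a sign and a factor of $n$ — a harmless convention discrepancy here, since both sides are set to zero — but worth pinning down before the main theorem's proof, where the sign of $S'(0)$ actually matters.)
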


\begin{proof}

Let $V(x) = e_{n+1} = (0,\dots, 0, 1)$, and consider the family of set $M(t)$ given by
\begin{equation}\label{def_Mt}
M(t) := \{x + t V(x): x \in M\}.
\end{equation} Let $S(t) := \int_{M(t)} d\sigma$ be the surface area of $M(t)$. On the one hand, clearly $M(t)$ is a translation of $M$ upwards by $t$ units, thus the surface area $S(t)$ remains invariant for all $t\in\mathbb{R}$, implying 
\begin{equation}\label{ds1}
\frac{d}{dt} S(t)\Big|_{t=0} = 0.
\end{equation}

On the other hand, for any $C^2$ vector field $V(x): \mathbb{R}^{n+1}\to \mathbb{R}^{n+1} $ (which is indeed the case for our $V$ since it is a constant vector field), the first variation of surface area \cite[page 7]{CM} is given by 
\begin{equation}\label{first_var}
\begin{split}
\frac{d}{dt} S(t)\Big|_{t=0} &= -\int_{M} V(x) \cdot \nu(x) H(x) \,d\sigma(x),
\end{split}
\end{equation}
where $\nu(x)$ is the unit outer normal at $x$ for $x\in M$. In the rest of this proof, we aim to show that the right hand side is strictly positive if we have a strict inequality in \eqref{ineq_assumption} for some $x' \in R^\circ$, leading to a contradiction with \eqref{ds1}.

To see this, we break the right hand side of \eqref{first_var} into the integrals on $M_1$, $M_2$ and $\hat M$, and recall that $V = e_{n+1}$.  Proposition~\ref{prop_S}(a) yields that $V(x) \cdot \nu(x)=\nu_{n+1}(x)=0$ on $\hat M$, thus the integral on $\hat M$ is zero. By Proposition~\ref{prop_S}(c), $M_i$ is the graph of $(x', f_i(x'))$ for $i=1,2$ and $x'\in R^\circ$, where $f_1 > f_2$, leading to 
\begin{equation}\label{ineq_S}
\begin{split}
\frac{d}{dt} S(t)\Big|_{t=0} &= \sum_{i=1}^2 - \int_{M_i} e_{n+1}\cdot \nu(x) H(x)\, d\sigma(x) \\
&=- \int_{R^\circ} (H(x', f_1(x')) - H(x', f_2(x')))\,dx'\\
&\geq 0.
\end{split}
\end{equation}
Here in the second equality we used that $|e_{n+1}\cdot \nu(x)| d\sigma(x) = dx'$, as well as the fact that $e_{n+1}\cdot \nu(x)$ is positive for $x\in M_1$, and negative for $x\in M_2$. The last inequality comes from the assumption \eqref{ineq_assumption}.
 
Note that $M$ being a $C^2$ hypersurface implies $H(x', f_i(x'))$ is continuous in $R^\circ$ for $i=1,2$. 
Thus if we have a strict inequality in \eqref{ineq_assumption} for some $x'_0\in R^\circ$, it implies $H(x', f_1(x')) - H(x', f_2(x'))< 0$ in some open neighborhood of $x'_0$, leading to a strict inequality in \eqref{ineq_S}, thus contradicting \eqref{ds1}. As a result, the inequality in \eqref{ineq_assumption} must be an equality for all $x'\in R^\circ$.
\end{proof}

Now we are ready to prove the main theorem. 

\begin{proof}[Proof of Theorem \ref{thm1}] Our goal is to show that $f_1 + f_2 \equiv c_0$ in $R^\circ$ for some constant $c_0$, which immediately implies that $M$ is symmetric about the hyperplane $x_{n+1} = \frac{c_0}{2}$.

Towards a contradiction, assume that $f_1 + f_2$ is not a constant in $R^\circ$. We will deform $M$ using a vector field $V$ that is a vertical shear flow, i.e. 
\[
V(x) = V(x',x_{n+1}) = (0,\dots, 0, v(x')) = v(x') e_{n+1}\quad\text{ for } x\in\mathbb{R}^{n+1},
\] where $v \in C^{2}(\mathbb{R}^n)$ will be fixed later. We again compute $\frac{d}{dt}S(t)|_{t=0}$ in two ways.

On the one hand, since $V(x) = v(x') e_{n+1}$ is a $C^2$ vector field in $\mathbb{R}^{n+1}$, the first variation of surface area \cite[page 7]{CM} and a similar argument to \eqref{ineq_S} again give
\[
\begin{split}
\frac{d}{dt} S(t)\Big|_{t=0} &= -\int_{M} V(x) \cdot \nu(x) H(x) \,d\sigma(x)\\
&=-\sum_{i=1}^2\int_{M_i} v(x') e_{n+1}\cdot \nu(x) H(x) \,d\sigma(x)\\
&=- \int_{R^\circ} v(x')  \big(H(x', f_1(x')) - H(x', f_2(x'))\big)\,dx',
\end{split}
\]
where $\nu(x)$ is the unit outer normal at $x$ for $x\in M$, and in the second equality we use the fact that $e_{n+1}\cdot \nu(x)\equiv 0$ for all $x\in\hat M$. By Proposition~\ref{prop_eq}, the integrand on the right hand side is zero for all $x' \in R^\circ$, leading to 
\begin{equation}\label{ds11}
\frac{d}{dt} S(t)\Big|_{t=0} = 0.
\end{equation}

On the other hand, if $f_1+f_2$ is not a constant in $R$, we will construct a $v \in C^2(\mathbb{R}^n)$ such that $\frac{d}{dt}S(t)|_{t=0} > 0$.  
Heuristically, we will define $v=f_1+f_2$ in most of $R^\circ$, and smoothly cut it off to zero near $\partial R$ as follows. For a sufficiently small $\delta>0$ that we will fix later, let
 \[
 R_\delta := \{x' \in R: \text{dist}(x', \partial R)>\delta\}.
 \]  
Let $\eta \in C^\infty(\mathbb{R}^n)$ be a standard mollifier supported in the unit ball, with $\eta \geq 0, \int_{\mathbb{R}^n} \eta(x') dx' = 1$ and $|\nabla \eta|\leq C(n)$. For any $a>0$, denote by $\eta_{a}(x') := a^{-n} \eta(a^{-1} x')$ its dilation. For $x'\in\mathbb{R}^n$, let
\[
\phi_\delta(x') :=  (1_{R_{2\delta/3}}*\eta_{\delta/3})(x')
\]
be a ``smooth cut-off function''. Clearly, $\phi_\delta \in C^\infty(\mathbb{R}^n)$ is nonnegative, and satisfies $\phi_\delta \equiv 1$ in $R_\delta$, $\phi_\delta \equiv 0$ in $\mathbb{R}^n \setminus R_{\delta/3}$.
 In addition, Young's inequality for convolution gives
\begin{equation}\label{gradient_phi}
\sup_{\mathbb{R}^n}|\nabla\phi_\delta| \leq \|1_{R_{2\delta/3}}\|_{L^\infty(\mathbb{R}^n)} \|\nabla \eta_{\delta/3}\|_{L^1(\mathbb{R}^n)} = \frac{3}{\delta} \|\nabla\eta\|_{L^1(\mathbb{R}^n)} \leq \frac{C(n)}{\delta}.
\end{equation}
We now define $v:\mathbb{R}^n \to \mathbb{R}$ as
 \begin{equation}\label{v_inside}
 v(x') = \begin{cases} (f_1(x')+f_2(x')) \phi_\delta(x')& \text{ for }x'\in R^\circ\\
 0 & \text{ for }x'\in \overline{R^c}.
 \end{cases}
 \end{equation}
 Note that such definition indeed leads to $v\in C^2(\mathbb{R}^n)$: the smoothness of $\phi_\delta$ and the fact that $f_1+f_2 \in C^2(R^\circ)$ yield that $v\in C^2(R^\circ)$, and combining this with the fact that  $v\equiv 0$ in $\mathbb{R}^{n}\setminus R_{\delta/3}$ gives that $v\in C^2(\mathbb{R}^n)$. In addition, we have $v = f_1+f_2$ in $R_{\delta}$.

For $i=1,2$, let $M_i(t)$ be the surface $\{(x', x_{n+1}): x'\in R^\circ, x_{n+1}=f_i(x')+v(x')t\}$. Recall that $f_i \in C^2(R^\circ)$ for $i=1,2$ by Proposition~\ref{prop_S}(c). Since $v \in C^2(\mathbb{R}^n)$, it follows that the map $x' \mapsto f_i(x')+v(x')t$ is in $C^2(R^\circ)$ for any $t\in\mathbb{R}$. 
Since $M(t)=M_1(t)\cup M_2(t) \cup \hat M$ (here $\hat M$ remains unchanged in $t$ since $v\equiv 0$ in a small neighborhood of $\partial R$), its surface area at a given $t$ can be computed as
  \[
     S(t)=\sum_{i=1}^{2}\int_{R^\circ}\sqrt{1+|\nabla (f_i(x')+v(x')t)|^2}\,dx' + \hat S,    \]
where $\hat S$ is the surface area of $\hat M$.  Note that $S(t)$ is differentiable in $t$ since $v$ is supported in $R_{\delta/3}$, and $\|f_i\|_{C^2(R_{\delta/3})}$ is finite for $i=1,2$.
Taking its derivative in $t$ and  setting $t=0$ yields 
  \begin{equation}\label{def_I}
     I:= \frac{dS(t)}{dt}\Big|_{t=0}=\int_{R_{\delta/3}}\sum_{i=1}^{2} \frac{\nabla f_i(x')\cdot \nabla v(x')}{\sqrt{1+|\nabla f_i(x')|^2}} \,dx',
  \end{equation}
 where we use that $v\equiv 0$ in $\mathbb{R}^n \setminus R_{\delta/3}$.
Note that the  integral in \eqref{def_I} is convergent since $\sup_{\mathbb{R}^n} |\nabla v|<\infty$ and $\frac{|\nabla f_i(x')|}{\sqrt{1+|\nabla f_i(x')|^2}}< 1$ in $R_{\delta/3}$.

With $v$ defined by \eqref{v_inside}, we have
\[
\nabla v = \phi_\delta  \nabla(f_1+f_2) + (f_1+f_2) \nabla \phi_\delta \quad\text{ in }R_{\delta/3}.
\]
Plugging the above into \eqref{def_I}, we can decompose $I$ into $I_\delta^1+I_\delta^2$ as follows (where we use that $\text{supp}|\nabla \phi_\delta| \subset R_{\delta/3} \setminus R_{\delta}$):
\begin{align}
I &= \int_{R_{\delta/3}}\underbrace{\sum_{i=1}^{2} \frac{\nabla f_i(x')\cdot \nabla (f_1+f_2)(x')}{\sqrt{1+|\nabla f_i(x')|^2}}}_{=:F(x')} \phi_{\delta}(x') dx' +\displaystyle \int_{R_{\delta/3} \setminus R_\delta }\sum_{i=1}^{2} \frac{\nabla f_i(x')\cdot \nabla \phi_\delta(x') (f_1+f_2)(x')}{\sqrt{1+|\nabla f_i(x')|^2}} dx' \nonumber\\
& =: I_{\delta}^1+ I_{\delta}^2.\label{def_F}
\end{align} 

We will show the following property for $I_\delta^1$.

\noindent\textbf{Claim 1.} If $f_1+f_2 \neq \text{const}$ in $R^\circ$, then there exists some $a_0>0$, such that $I_\delta^1 \geq a_0>0$ for all sufficiently small $\delta>0$.

\noindent\emph{Proof of Claim 1}: For any $q\in\mathbb{R}^n$, define  $A(q):=\sqrt{1+|q|^2}$. Then $\nabla A(q)=\frac{q}{\sqrt{1+|q|^2}}$, and 
\[
   \partial^2_{q_iq_j}A(q)=(1+|q|^2)^{-\frac{3}{2}}(\delta_{ij}+|q|^2\delta_{ij}-q_iq_j) \quad\text{for } 1\le i,j\le n,
\]
where $\delta_{ij}=1$ if $i=j$, and $0$ if $i\neq j$.
So the Hessian of $A$ satisfies
\begin{equation}\label{eqA_H}
   \nabla^2A(q)\ge (1+|q|^2)^{-\frac{3}{2}}I>0 \quad\text{ for all }q\in \mathbb{R}^n,
\end{equation}
where the two inequalities are in the following sense: we say two $n\times n$ symmetric matrices $U,V$ satisfies $U> V$ (or $U\geq V$) if $U-V$ is positive definite (or  positive semi-definite). Note that \eqref{eqA_H} implies that $A$ is strict convex in $\mathbb{R}^n$.

By the definition of $F(x')$ in \eqref{def_F}, we have 
\begin{equation*}
   F(x')=\sum_{i=1}^{2}\nabla A(\nabla f_i(x'))\cdot (\nabla f_1+\nabla f_2).
\end{equation*}
Denoting $q_1:=\nabla f_1$ and $q_2:=-\nabla f_2$, we rewrite the above equation as
\[
   F(x')=\big(\nabla A(q_1(x'))-\nabla A(q_2(x'))\big) \cdot (q_1(x')-q_2(x')).
\]

For any fixed $x'$, applying the mean-value theorem to the scalar-valued function $g(t):=\nabla A(tq_1+(1-t)q_2)\cdot(q_1-q_2)$ for $0\leq t\leq 1$, we know there exists some $c\in[0,1]$ depending on $x'$, such that
\begin{equation}\label{F1}
\begin{split}
F(x')&= g(1)-g(0)=g'(c)  \\
&= (q_1(x')-q_2(x'))^{T}\, \nabla^2A\big(cq_1(x')+(1-c)q_2(x')\big) \,(q_1(x')-q_2(x'))\\
&\geq \big(1+(|q_1(x')|+|q_2(x')|)^2\big)^{-\frac{3}{2}}|q_1(x')-q_2(x')|^2,
\end{split}
\end{equation}
where we use \eqref{eqA_H} in the last inequality.
Therefore $F(x')\ge 0$ in $R^\circ$, and the equality holds if and only if $q_1(x')=q_2(x')$, i.e. $\nabla (f_1+f_2)(x')=0$.   

If $f_1+f_2\neq const$ in $R^\circ$, then since $f_1,f_2 \in C^2(R^{\circ})$,  there exists some $\bar{x}\in R^\circ$ and $\epsilon, b_1, b_2>0$, such that $B_{\epsilon}^n(\bar{x})\subset R_\delta$ for all sufficiently small $\delta>0$, and
\begin{equation}\label{eqf_12}
   |\nabla (f_1+f_2)(x')|\ge b_1 \textrm{~~ and ~~} |\nabla f_1(x')|+|\nabla f_2(x')|\le b_2\quad\text{ for } x'\in B_{\epsilon}^n(\bar{x}).
\end{equation}
By \eqref{F1} and \eqref{eqf_12}, for any $x'\in B_{\epsilon}^n(\bar{x})$, we have
\[
  \begin{split}
   F(x') 
           & \ge (1+b_2^2)^{-\frac{3}{2}}|q_1(x')-q_2(x')|^2\\
           & = (1+b_2^2)^{-\frac{3}{2}}|\nabla (f_1+ f_2)(x')|^2 \\
           &\geq (1+b_2^2)^{-\frac{3}{2}}b_1^2.
   \end{split}
\]

Combining this with the fact that $F(x')\ge 0$ in $R^\circ$, we obtain a lower bound of $I^1_{\delta}$ as follows, where we use that $\phi_\delta\equiv 1$ in $B_{\epsilon}^n(\bar{x})\subset R_\delta$, as well as $\phi_\delta\geq 0$:
\[
   \begin{split}
      I^1_{\delta} & \ge \int_{B_{\epsilon}^n(\bar{x})}F(x')dx'  \ge (1+b_2^2)^{-\frac{3}{2}}b_1^2|B_{\epsilon}^n(\bar{x})|
                         =:a_0>0.
   \end{split}
\]
finishing the proof of Claim 1.

In the rest of the proof, we aim to show that $| I_\delta^2| $ can be made arbitrarily small by setting $\delta$ small. Clearly one can bound it as
\[
|I_\delta^2| \leq \underbrace{|R_{\delta/3}\setminus R_\delta|}_{=:T_1} \, \underbrace{\sup_{R_{\delta/3}\setminus R_\delta} |(f_1+f_2)\nabla \phi_\delta|}_{=:T_2}\, \underbrace{\sup_{R_{\delta/3} \setminus R_\delta} \Big|\sum_{i=1}^{2} \frac{\nabla f_i}{\sqrt{1+|\nabla f_i|^2}}\Big|}_{=:T_3}
\]
where $|R\setminus R_\delta|$ denotes the Lebesgue measure of $R\setminus R_\delta$ in $\mathbb{R}^n$.

$T_1$ and $T_2$ are rather straightforward to control. Since $R$ is bounded and has a $C^{1,1}$ boundary by Proposition~\ref{prop_S}(b), there exists some $C_1(M,n)>0$ such that
  \begin{equation}\label{area_bd}
T_1 \leq |R \setminus R_\delta| \leq C_1(M,n) \delta
  \end{equation} 
  for all $\delta\in(0,1)$.
To bound $T_2$, by the definition of $\phi_\delta$ and the fact that $M$ is bounded (thus so are $f_1, f_2$),  we have 
 \[T_2 \leq \sup_{R^\circ}|f_1+f_2| \sup_{\mathbb{R}^n}|\nabla \phi_\delta| \leq C_2(M,n) \delta^{-1},\] where we use \eqref{gradient_phi} in the last inequality.

It is more delicate to bound the last term $T_3$. Note that the product $T_1 T_2$ is of order $O(1)$, thus the crude bound $T_3\leq 2$ is not sufficient. We claim that $T_3$ is actually of order $\sqrt{\delta}$, since the two terms in the sum has some nice cancellation properties:

\noindent\textbf{Claim 2.} Since $M$ is a $C^2$ hypersurface embedded in $\mathbb{R}^n$, it satisfies the interior ball property with radius $\rho>0$. Then for all $\delta>0$, we have
\begin{equation}\label{sum2}T_3:=\left|\sum_{i=1}^{2} \frac{\nabla f_i}{\sqrt{1+|\nabla f_i|^2}}\right| \leq 2\sqrt{\frac{2(\rho+r)}{\rho r}\delta}\quad\text{ in }R^\circ\setminus R_\delta,
\end{equation}
where $r>0$ is the constant in Condition S'.

\noindent\emph{Proof of Claim 2}. 
Take any $\bar x' \in R^\circ\setminus R_\delta$. For $i=1,2$, let $\nu_i = (\nu_i', \nu_i^{n+1})$ be the unit outer normal of $M$ at the point $\bar{x}_i := (\bar x',f_i(\bar x'))$. By Proposition~\ref{prop_S}(a), $\nu_i^{n+1}\neq 0$. We then have 
\[
   \nu_1=(\nu_1', \nu_1^{n+1})=\frac{(-\nabla f_1(\bar{x}'), 1)}{\sqrt{1+|\nabla f_1(\bar{x}')|^2}}, \quad 
   \nu_2=(\nu_2', \nu_2^{n+1})=\frac{(\nabla f_2(\bar{x}'), -1)}{\sqrt{1+|\nabla f_2(\bar{x}')|^2}}.  
\]
Hence, one has 
 \begin{equation}\label{sum3}
   T_3=\left|\frac{\nabla f_1(\bar{x}')}{\sqrt{1+|\nabla f_1(\bar{x}')|^2}}+\frac{\nabla f_2(\bar{x}')}{\sqrt{1+|\nabla f_2(\bar{x}')|^2}}\right|=|\nu_1'-\nu_2'|, 
 \end{equation}
%
%
%
 so it suffices to bound the right hand side. 
 

 Since $\bar x'\in R^\circ\setminus R_\delta$, there exists some $x_0' \in \partial R$, such that $|\bar x'-x_0'| \leq \delta$. Using that $R$ is closed, there exists some $x_0 := (x_0', x_0^{n+1}) \in M$ that projects to $x_0'$. (If there are more than one such points, let $x_0$ be any of them.) By Proposition~\ref{prop_S}(a), $M$ has a horizontal outer normal at $x_0$, which we denote by $\nu_0 = (\nu_0', 0)$.  See Figure~\ref{fig2} for an illustration of the points.
 
 \begin{figure}[h!]
\includegraphics[scale=0.8]{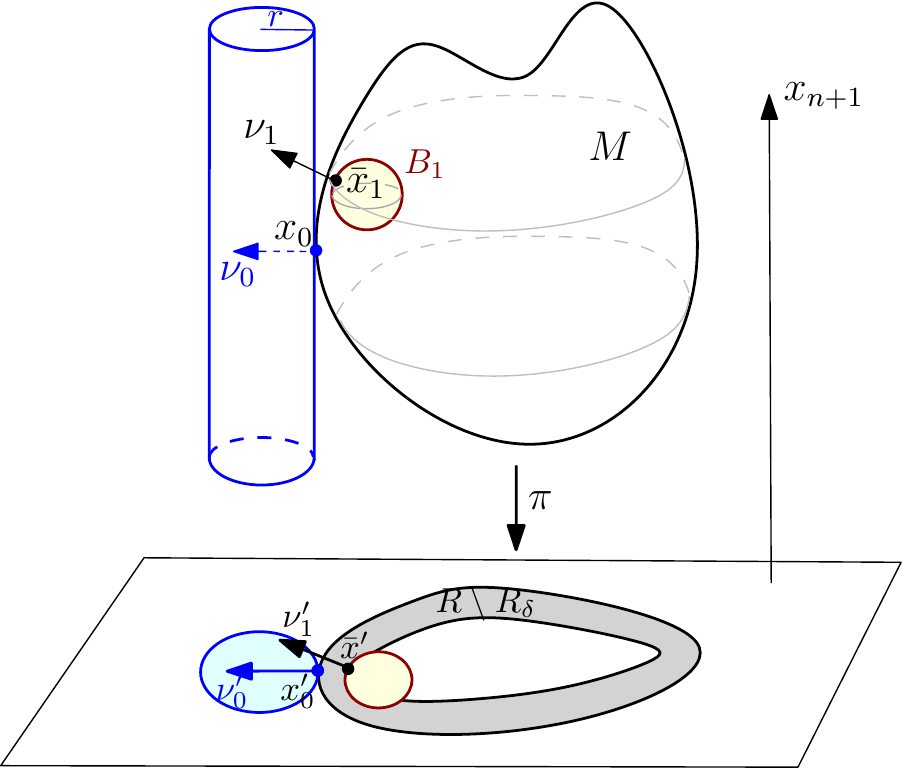}
\caption{Illustration of the proof of Claim 2.\label{fig3}}
\end{figure}

  By Condition S', the cylinder $|x' - (x_0'+r\nu_0')|=r$ has an empty intersection with $G$. On the other hand,  $M$ satisfies the interior ball condition with radius $\rho>0$, thus the balls $B_i := B^{n+1}_{\rho}(\bar x_i - \rho \nu_i)$ satisfy $B_i \subset G$ for $i=1,2$. As a result, the open ball $B_i$ must completely lie outside the cylinder, implying that its center must have distance at least $\rho+r$ to the axis of cylinder. That is,
  \begin{equation}\label{diff_center}
  |(\bar x' - \rho \nu_i' ) - (x_0' + r\nu_0')| \geq \rho+r\quad\text{ for } i=1,2,
  \end{equation}
where we used that $\bar x_1' = \bar x_2' = \bar x'$.
  Since $|\bar x'-x_0'|<\delta$ (this follows from our choice of $x_0$), \eqref{diff_center} implies that
  \[
  |\rho \nu_i' + r\nu_0'| \geq \rho + r -\delta.
  \]
Taking square of both sides and using the facts that $|\nu_0'|=1$ and  $|\nu_i'|< 1$, for all $\delta>0$ we have  
\[
 \rho^2\underbrace{(1-|\nu_i'|^2)}_{> 0} + r^2\underbrace{(1-|\nu_0'|^2)}_{= 0}+ 2\rho r \underbrace{(1-\nu_i' \cdot \nu_0')}_{> 0} \leq 2 (\rho+r)\delta-\delta^2 < 2(\rho+r)\delta.
\]
This directly leads to
\[
\quad 1-\nu_i' \cdot \nu_0' < \frac{\rho+r}{\rho r}\delta \quad\text{ for }i=1,2,
\]
allowing us to bound $|\nu_i' - \nu_0'|$ as follows (where again we used that $|\nu_0'|=1$ and $|\nu_i'|< 1$:
\[
|\nu_i' - \nu_0'|^2 = |\nu_i'|^2+1 - 2 \nu_i'\cdot \nu_0' < 2 - 2 \nu_i'\cdot \nu_0' < \frac{2(\rho+r)}{\rho r}\delta\quad\text{ for }i=1,2.
\]
As a result, we have \[
|\nu_2' - \nu_1'| \leq |\nu_1'-\nu_0'| + |\nu_2'-\nu_0'| \leq 2\sqrt{\frac{2(\rho+r)}{\rho r}\delta}.
\]
Plugging this into \eqref{sum3} finishes the proof of Claim 2.

Once we prove Claim 2, the bounds on $T_1,T_2, T_3$ yield that $|I_\delta^2| \leq C(M,n,r)\sqrt{\delta}$ for all $\delta>0$, thus by setting $\delta\in(0,1)$ sufficiently small and using Claim 1, we have that $I > a_0/2>0$. This contradicts with \eqref{ds11}, thus the proof is finished.
  \end{proof}

\FloatBarrier

\end{document}